\newcommand{\Mdef}[2]{\newcommand{#1}{\relax \ifmmode #2 \else $#2$\fi}}
\newcommand{\sm }{\wedge}
\newcommand{\tensor}{\otimes}
\newcommand{\Hom}{\mathrm{Hom}}
\newcommand{\Tor}{\mathrm{Tor}}
\newcommand{\Ext}{\mathrm{Ext}}
\Mdef{\bhom}{\mathbf{\hat{H}om}}
\Mdef{\Mod}{\mathrm{mod}}
\newcommand{\st}{\; | \;}
\newtheorem{thm}{Theorem}[section]
\newtheorem{lemma}[thm]{Lemma}
\newtheorem{cor}[thm]{Corollary}
\theoremstyle{definition}
\newtheorem{remark}[thm]{Remark}
\newcommand{\qqed}{\qed \\[1ex]}
\renewenvironment{proof}[1][\hspace*{-.8ex}]{\noindent {\bf Proof #1:\;}}{\qqed}
\Mdef{\PH} {\Phi^H}
\Mdef{\PK} {\Phi^K}
\Mdef{\PL} {\Phi^L}
\Mdef{\PT} {\Phi^{\T}}
\Mdef{\ef}{E{\cF}_+}
\Mdef{\etf}{\widetilde{E}{\cF}}
\Mdef{\eg}{E{G}_+}
\Mdef{\etg}{\tilde{E}{G}}
\Mdef{\infl}{\mathrm{inf}}
\Mdef{\defl}{\mathrm{def}}
\Mdef{\res}{\mathrm{res}}
\Mdef{\ind}{\mathrm{ind}}
\Mdef{\coind}{\mathrm{coind}}
\Mdef{\univ}{\mathcal{U}}
\Mdef{\Fp}{\mathbb{F}_p}
\Mdef{\Zpinfty}{\Z /p^{\infty}}
\Mdef{\Zpadic}{\Z_p^{\wedge}}
\newcommand{\bi}{\begin{itemize}}
\newcommand{\be}{\begin{enumerate}}
\newcommand{\bc}{\begin{center}}
\newcommand{\bd}{\begin{description}}
\newcommand{\ei}{\end{itemize}}
\newcommand{\ee}{\end{enumerate}}
\newcommand{\ec}{\end{center}}
\newcommand{\ed}{\end{description}}
\newcommand{\lra}{\longrightarrow}
\newcommand{\lla}{\longleftarrow}
\Mdef{\we}{\mathbf{we}}
\Mdef{\fib}{\mathbf{fib}}
\Mdef{\cof}{\mathbf{cof}}
\Mdef{\BI}{\mathcal{BI}}
\newcommand{\hocolim}{\mathop{  \mathop{\mathrm {holim}}\limits_\rightarrow} \nolimits}
\Mdef{\B}{\mathbb{B}}
\Mdef{\C}{\mathbb{C}}
\Mdef{\D}{\mathbb{D}}
\Mdef{\E}{\mathbb{E}}
\Mdef{\T}{\mathbb{T}}
\Mdef{\F}{\mathbb{F}}
\Mdef{\G}{\mathbb{G}}
\Mdef{\I}{\mathbb{I}}
\Mdef{\N}{\mathbb{N}}
\Mdef{\Q}{\mathbb{Q}}
\Mdef{\R}{\mathbb{R}}
\Mdef{\bbS}{\mathbb{S}}
\Mdef{\Z}{\mathbb{Z}}
\Mdef{\bA}{\mathbb{A}}
\Mdef{\bB}{\mathbb{B}}
\Mdef{\bC}{\mathbb{C}}
\Mdef{\bD}{\mathbb{D}}
\Mdef{\bE}{\mathbb{E}}
\Mdef{\bF}{\mathbb{F}}
\Mdef{\bG}{\mathbb{G}}
\Mdef{\bH}{\mathbb{H}}
\Mdef{\bI}{\mathbb{I}}
\Mdef{\bJ}{\mathbb{J}}
\Mdef{\bK}{\mathbb{K}}
\Mdef{\bL}{\mathbb{L}}
\Mdef{\bM}{\mathbb{M}}
\Mdef{\bN}{\mathbb{N}}
\Mdef{\bO}{\mathbb{O}}
\Mdef{\bP}{\mathbb{P}}
\Mdef{\bQ}{\mathbb{Q}}
\Mdef{\bR}{\mathbb{R}}
\Mdef{\bS}{\mathbb{S}}
\Mdef{\bT}{\mathbb{T}}
\Mdef{\bU}{\mathbb{U}}
\Mdef{\bV}{\mathbb{V}}
\Mdef{\bW}{\mathbb{W}}
\Mdef{\bX}{\mathbb{X}}
\Mdef{\bY}{\mathbb{Y}}
\Mdef{\bZ}{\mathbb{Z}}
\Mdef{\cA}{\mathcal{A}}
\Mdef{\cB}{\mathcal{B}}
\Mdef{\cC}{\mathcal{C}}
\Mdef{\mcD}{\mathcal{D}} 
\Mdef{\cE}{\mathcal{E}}
\Mdef{\cF}{\mathcal{F}}
\Mdef{\cG}{\mathcal{G}}
\Mdef{\mcH}{\mathcal{H}} 
\Mdef{\cI}{\mathcal{I}}
\Mdef{\cJ}{\mathcal{J}}
\Mdef{\cK}{\mathcal{K}}
\Mdef{\mcL}{\mathcal{L}}
\Mdef{\cM}{\mathcal{M}}
\Mdef{\cN}{\mathcal{N}}
\Mdef{\cO}{\mathcal{O}}
\Mdef{\cP}{\mathcal{P}}
\Mdef{\cQ}{\mathcal{Q}}
\Mdef{\mcR}{\mathcal{R}}
\Mdef{\cS}{\mathcal{S}}
\Mdef{\cT}{\mathcal{T}}
\Mdef{\cU}{\mathcal{U}}
\Mdef{\cV}{\mathcal{V}}
\Mdef{\cW}{\mathcal{W}}
\Mdef{\cX}{\mathcal{X}}
\Mdef{\cY}{\mathcal{Y}}
\Mdef{\cZ}{\mathcal{Z}}
\Mdef{\ca}{\mathcal{a}}
\Mdef{\ct}{\mathcal{t}}
\Mdef{\At}{\tilde{A}}
\Mdef{\Bt}{\tilde{B}}
\Mdef{\Ct}{\tilde{C}}
\Mdef{\Et}{\tilde{E}}
\Mdef{\Ht}{\tilde{H}}
\Mdef{\Kt}{\tilde{K}}
\Mdef{\Lt}{\tilde{L}}
\Mdef{\Mt}{\tilde{M}}
\Mdef{\Nt}{\tilde{N}}
\Mdef{\Pt}{\tilde{P}}
\Mdef{\tA}{\tilde{A}}
\Mdef{\tB}{\tilde{B}}
\Mdef{\tC}{\tilde{C}}
\Mdef{\tE}{\tilde{E}}
\Mdef{\tH}{\tilde{H}}
\Mdef{\tK}{\tilde{K}}
\Mdef{\tL}{\tilde{L}}
\Mdef{\tM}{\tilde{M}}
\Mdef{\tN}{\tilde{N}}
\Mdef{\tP}{\tilde{P}}
\Mdef{\ft}{\tilde{f}}
\Mdef{\xt}{\tilde{x}}
\Mdef{\yt}{\tilde{y}}
\Mdef{\Ab}{\overline{A}}
\Mdef{\Bb}{\overline{B}}
\Mdef{\Cb}{\overline{C}}
\Mdef{\Db}{\overline{D}}
\Mdef{\Eb}{\overline{E}}
\Mdef{\Fb}{\overline{F}}
\Mdef{\Gb}{\overline{G}}
\Mdef{\Hb}{\overline{H}}
\Mdef{\Ib}{\overline{I}}
\Mdef{\Jb}{\overline{J}}
\Mdef{\Kb}{\overline{K}}
\Mdef{\Lb}{\overline{L}}
\Mdef{\Mb}{\overline{M}}
\Mdef{\Nb}{\overline{N}}
\Mdef{\Ob}{\overline{O}}
\Mdef{\Pb}{\overline{P}}
\Mdef{\Qb}{\overline{Q}}
\Mdef{\Rb}{\overline{R}}
\Mdef{\Sb}{\overline{S}}
\Mdef{\Tb}{\overline{T}}
\Mdef{\Ub}{\overline{U}}
\Mdef{\Vb}{\overline{V}}
\Mdef{\Wb}{\overline{W}}
\Mdef{\Xb}{\overline{X}}
\Mdef{\Yb}{\overline{Y}}
\Mdef{\Zb}{\overline{Z}}
\Mdef{\db}{\overline{d}}
\Mdef{\hb}{\overline{h}}
\Mdef{\qb}{\overline{q}}
\Mdef{\rb}{\overline{r}}
\Mdef{\tb}{\overline{t}}
\Mdef{\ub}{\overline{u}}
\Mdef{\vb}{\overline{v}}
\Mdef{\hc}{\hat{c}}
\Mdef{\he}{\hat{e}}
\Mdef{\hf}{\hat{f}}
\Mdef{\hA}{\hat{A}}
\Mdef{\hH}{\hat{H}}
\Mdef{\hJ}{\hat{J}}
\Mdef{\hM}{\hat{M}}
\Mdef{\hP}{\hat{P}}
\Mdef{\hQ}{\hat{Q}}
\Mdef{\thetab}{\overline{\theta}}
\Mdef{\phib}{\overline{\phi}}
\Mdef{\uA}{\underline{A}}
\Mdef{\uB}{\underline{B}}
\Mdef{\uC}{\underline{C}}
\Mdef{\uD}{\underline{D}}
\Mdef{\bolda}{\mathbf{a}}
\Mdef{\boldb}{\mathbf{b}}
\Mdef{\bfD}{\mathbf{D}}
\Mdef{\fm}{\frak{m}}
\Mdef{\fp}{\frak{p}}
\Mdef{\eps}{\epsilon}
\newcommand{\HBG}{H^*(BG)}
\newcommand{\HBK}{H^*(BK)}
\newcommand{\PP}{\mathbb{P}}
\newcommand{\piG}{\pi^G}
\newcommand{\CBG}{C^*(BG)}
\newcommand{\CBH}{C^*(BH)}
\newcommand{\CBK}{C^*(BK)}
\newcommand{\CBU}{C^*(BU)}
\begin{document}
\title{Borel cohomology and the relative Gorenstein condition for
 classifying spaces of compact Lie groups}

\author{J.P.C.Greenlees}
\address{Mathematical Institute, Zeeman Building
Coventry CV4 7AL. UK.}
\email{john.greenlees@warwick.ac.uk}

\date{}

\begin{abstract}
For a compact Lie group $G$ we show that if the  representing spectrum for 
Borel cohomology generates its category of modules if $G$ is
connected.  For a closed subgroup $H$ of $G$ we consider the map 
$\CBG\lra \CBH$ and establish the sense in which it is relatively
Gorenstein. Throughout, we pay careful attention to the importance of
connectedness of the groups. 
\end{abstract}

\thanks{I am grateful for to the Isaac Newton Institute for providing
  the environment to do this work and to EPSRC Grant Number EP/P031080/1 for
  support.  }
\maketitle

\tableofcontents
\section{Introduction}

\subsection{Context}
In this paper we consider a compact Lie group $G$ and continue the
study of $\CBG=C^*(BG;k)$ for a commutative ring $k$ as in
\cite{DGI}. We wish to use methods of commutative algebra, so we view
$\CBG$ as a commutative ring spectrum as in \cite{DGI}. More
precisely, $\CBG$ is the function spectrum $F(BG_+, Hk)$ from the
based space $BG_+$ to the Eilenberg-MacLane spectrum $Hk$ representing
ordinary cohomology with coefficients in $k$; the name is justified by
the fact that it is a commutative model for the cochains on $BG$ in
the sense that $\pi_*(\CBG)=\HBG$. 

To describe the result it is convenient to say that $G$ is 
{\em $k$-connected} if it is connected or if there is a prime $p$ so that 
$p^nk=0$ and $\pi_0(G)$ is a $p$-group. We say that a $d$-dimensional
real  representation 
$V$ of $G$ is {\em $k$-orientable} if the action of $G$ on $H_d(S^V;k)$
is trivial. We note that if $k$ is a field, every representation is $k$-orientable 
if $G$ is $k$-connected or in general if $k$  of characteristic 2. 

\subsection{The absolute case}
It is established in \cite{DGI} that the map $\CBG\lra k$ is
Gorenstein in the sense that if the adjoint representation $LG$ is
$k$-orientable  then there is an equivalence
$\Hom_{\CBG}(k, \CBG)\simeq \Sigma^{LG}k$ of $\CBG$-modules.
The suspension by $LG$ is simply suspension by $g=\dim (G)$, but 
writing it in this way gives a statement that is natural for automorphisms of $G$.

The statement is definitely false in general. The paper \cite{Lieca}
gives the example $G=O(2n)$, and one  easily checks that 
$$\Hom_{C^*(BO(2))}(\Q, C^*(BO(2))\simeq \Sigma^3\Q. $$

\subsection{The relative case}
The present paper begins the investigation of the relative case. We
suppose given a closed subgroup $H$, and  consider the associated map 
$$C^*(BG)\lra C^*(BH)$$
of commutative ring spectra. If $H$ is the trivial group this reduces
to the absolute case considered above.  We will show that the map 
is relatively Gorenstein, but in a sense that involves certain
twisting. 

If the representation  $LG$ of $G$ is $k$-orientable then Theorem
\ref{thm:relgor} states that there is an 
equivalence 
\begin{equation}
\label{eqn:relgor}
\Hom_{\CBG}(\CBH, \CBG )\simeq C^*(BH^{-L(G,H)}),   
\end{equation}
of $\CBG$-modules, where $L(G,H)=LG/LH$ is the representation of 
$W_G(H)=N_G(H)/H$ given by the conjugation action on the tangent space
to the identity coset of $G/H$. If in addition $LH$ is $k$-orientable
then this shows that $\CBG\lra \CBH$ is relatively Gorenstein of shift
$\dim (G/H)$ as one might hope. 

If we remove the condition that $LG$ is $k$-orientable then there is
an equivalence
\begin{equation}
\label{eqn:relgortw}
\Hom_{\CBG}(C^*(BH^{LG}), C^*(BG^{LG}))\simeq C^*(BH^{-L(G,H)}).    
\end{equation}
Note that this applies also in the case when $H=1$ where it gives an
appropriate statement when the adjoint representation is not
$k$-orientable. The reader is encouraged to see how this works 
for the case $G=O(2)$ and $k=\Q$ mentioned above. 

\subsection{Relationship to previous results}
Equivalence (\ref{eqn:relgor}) was asserted as Theorem 6.8 of
\cite{BGstrat} in general, but even when $G$ is $k$-connected, the proof 
given there is incomplete. Indeed, there are two significant
omissions. First, the argument given
there relied on the assertion that the Borel spectrum generates its
category of modules. It is explained in Sections \ref{sec:bgen} and
\ref{sec:Kuenneth} that this
follows from the convergence of an Eilenberg-Moore spectral
sequence when $G$ is $k$-connected and that it is not true in general.
This completes the proof of Theorem \ref{thm:relgor} in the case that $G$ is
$k$-connected. We then observe in Section \ref{sec:relgor} that
Equivalence (\ref{eqn:relgortw}) follows by embedding $G$ in a connected group $U$ and then using the 
results for $G\subseteq U$ and $H\subseteq U$. Finally, Theorem 6.8 of
\cite{BGstrat} is not true in general when $LG$ is not orientable,
even when $H=1$, as pointed out above.

The author observed the gap in the earlier proof when considering the
general Gorenstein properties of the map $C^*(BG)\lra C^*(BH)$.
The formulation of the Gorenstein property in Equation (\ref{eqn:relgor})
is rather straightforward.  However for many purposes, one is more
interested in the Gorenstein {\em duality} property which leads to a
suitable  local cohomology spectral sequence. There is a well understood
equivariant approach to this (the local cohomology theorem for
the family of subconjugates of $H$),  but the commutative algebra, Morita
theory and orientability involves significant new complications
 that the author intends to return to elsewhere

\subsection{Borel cohomology}
We are concerned with Borel cohomology with coefficients in a
commutative ring $k$, defined on based
$G$-spaces $X$ by 
$$b_G^*(X)=H^*(EG\times_GX, EG\times x_0;k)=H^*(EG_+\sm_GX;k). $$
From the isomorphisms
$$b_G^*(X)=H^*(EG_+\sm_GX)\cong [EG_+\sm_GX,Hk]^*
\cong [EG_+\sm X, Hk]_G^*\cong [X, F(EG_+, Hk)]_G^*$$
it is apparent that the representing spectrum is given by
$b=F(EG_+,Hk)$. As first pointed out in \cite{EM97} this can be realized as a commutative orthogonal
spectrum, since $Hk$ is a commutative ring and $EG$ is a space with a
 diagonal. 

The main substance of this paper is the proof of  some basic facts that are useful
in studying the homotopy category of $b$-module $G$-spectra. The results
generally have counterparts in more classical approaches, many of
which are familiar. 

\subsection{Conventions}
We say that a space  $B$ is {\em $k$-simply connected} if $B$ is simply 
connected or if $p^nk=0$ for some prime $p$ and  $n\geq 1$, 
the space $B$ is connected   and $\pi_1(B)$ is a finite $p$-group. 

We also say that a group $G$ is {\em $k$-connected} if it is connected or if 
$p^nk=0$ for some prime $p$ and  $n\geq 1$ and $\pi_0(G)$ is a finite
$p$-group. If $G$ is $k$-connected then $BG$ is $k$-simply connected. 

\section{The K\"unneth-Eilenberg-Moore spectral sequence}
Ordinary derived homological algebra lifts to non-equivariant spectra 
very well \cite{EKMM} to give a K\"unneth spectral sequence 
$$\Tor^{R_*}_{*,*}(M_*, N_*)\Rightarrow (M\tensor_RN)_*$$
for any commutative ring spectrum $R$ and $R$-modules $M$ and $N$. This works 
because we can realize a $R_*$ resolution by the ring $R$ (we will run
through the argument in Section \ref{sec:Kuenneth} below); because $R$ is 
a {\em generator} of the category of $R$-modules, 
 the spectral sequence always converges. 

Equivariantly, we can make the same construction, but usually the 
spectral sequence will not converge. This is because if $R$ is a 
$G$-spectrum we need all the modules $G/H_+\sm R$ to generate the 
category of $R$-modules. The point of the following theorem is that 
 the ring $G$-spectrum $b$ {\em does}
generate the category of all $b$-modules provided $G$ is 
$k$-connected. This is a very special feature of Borel cohomology.

\begin{thm}
\label{thm:EMmain}
If $b$ represents Borel cohomology with coefficients in $k$  and $G$ is 
$k$-connected then for any $b$-modules $M$, $N$ there is a strongly
convergent spectral sequence 
$$\Tor^{\HBG}_{*,*}(\piG_*M, \piG_*N)\Rightarrow
\piG_*(M\tensor_bN),  $$
and a conditionally convergent spectral sequence
$$\Ext_{\HBG}^{*,*}(\piG_*M, \piG_*N)\Rightarrow \piG_*(\Hom_b(M,N)). $$
\end{thm}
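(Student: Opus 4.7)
The plan is to adapt the EKMM construction of a Künneth spectral sequence to the $G$-equivariant category of $b$-modules; the essential extra ingredient is that, for $k$-connected $G$, the module $b$ generates this category, as promised in Sections \ref{sec:bgen} and \ref{sec:Kuenneth}.

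First I would realize a projective resolution of $\piG_*M$ over $\HBG$ at the level of $b$-modules. Choose a free resolution
$$\cdots \to P_1 \to P_0 \to \piG_*M \to 0$$
over $\HBG=\piG_*b$. A wedge of suspensions of $b$ has free graded $\HBG$-module $\piG_*$, and maps out of such a wedge in the category of $b$-modules are determined by their effect on $\piG_*$; so one can build inductively a tower
$$\tilde P_0 \leftarrow \tilde P_{\leq 1} \leftarrow \tilde P_{\leq 2} \leftarrow \cdots$$
of $b$-modules equipped with compatible maps to $M$, whose successive cofibres are wedges of suspensions of $b$ realizing the resolution on $\piG_*$. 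Let $P_\infty=\hocolim_n \tilde P_{\leq n}$, with the induced map $P_\infty \to M$.

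Next, applying $(-)\tensor_b N$ to the tower, each cofibre is a wedge of suspensions of $N$ (since $b\tensor_b N\simeq N$), so on $\piG_*$ it becomes $\piG_*\tilde P_\bullet \otimes_{\HBG}\piG_*N$; the associated spectral sequence has
$$E^2_{*,*}=\Tor^{\HBG}_{*,*}(\piG_*M,\piG_*N)$$
abutting to $\piG_*(P_\infty\tensor_b N)$. Dually, $\Hom_b(-,N)$ applied to the tower produces an inverse tower whose spectral sequence has $E_2=\Ext_{\HBG}^{*,*}(\piG_*M,\piG_*N)$ abutting to $\piG_*\Hom_b(P_\infty,N)$.

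Finally, and this is the main obstacle, I must identify the abutments with $\piG_*(M\tensor_bN)$ and $\piG_*\Hom_b(M,N)$; equivalently, I must show that the fibre $F$ of $P_\infty \to M$ is contractible. By construction $\piG_*F=0$, and for a general compact Lie group this is \emph{not} enough: one ordinarily also needs $[G/H_+,F]^G_*=0$ for every closed $H$, so that the full family $\{G/H_+\sm b\}_H$ is required to generate the category of $b$-modules. The distinctive feature of Borel cohomology being exploited is that for $k$-connected $G$ the single generator $b$ suffices; this is the conclusion of the Eilenberg--Moore argument to be carried out in Sections \ref{sec:bgen} and \ref{sec:Kuenneth}. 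Granting it, $F\simeq 0$ and $P_\infty\to M$ is an equivalence. Strong convergence of the $\Tor$ spectral sequence is then standard for the cellular filtration, while conditional convergence of the $\Ext$ spectral sequence follows from Boardman's criterion applied to the inverse tower $\{\Hom_b(\tilde P_{\leq n},N)\}$.
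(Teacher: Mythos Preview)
Your construction of the tower and the identification of the $E_2$-page are fine and match the paper's. The gap is in the logical order: you black-box the fact that $b$ generates the category of $b$-modules and defer it to Sections~\ref{sec:bgen} and~\ref{sec:Kuenneth}, but in the paper that fact (Theorem~\ref{thm:genmain}) is \emph{deduced from} Theorem~\ref{thm:EMmain}, not assumed in its proof. Concretely, the paper first establishes the K\"unneth spectral sequence, then specializes it to $N=F(G/K_+,b)$ (Corollary~\ref{cor:main}) to see that $\piG_*M=0$ forces $\pi^K_*M=0$ for all $K$, and only then concludes that $b$ generates. So your argument, as written, is circular.

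The paper breaks the circle by not trying to show that the fibre of $P_\infty\to M$ is contractible. Instead it shows directly that the map $\alpha_M\tensor_bN\colon M^{\infty}\tensor_bN\to M\tensor_bN$ is a $\piG_*$-isomorphism. This is done in two steps: first a formal localizing-subcategory reduction to the case $M=F(X,b)$ and $N=F(G/K_+,b)$ (using only that the modules $F(G/K_+,b)$ generate, which is automatic and has nothing to do with $k$-connectedness); second, and this is the substance you are missing, an identification of the resulting filtration with the classical Eilenberg--Moore filtration for the pullback square with base $BG$, fibre $BK$, and total space $EG\times_KZ$. Convergence then comes from Dwyer's theorem \cite{DwyerEM}, and it is here that the $k$-connectedness hypothesis enters. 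If you want to keep your order of argument you would need an independent proof of generation; Section~\ref{sec:torus} does this for tori and $U(n)$, but not for general $k$-connected $G$.
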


A case of particular interest is  $N=F(G/K_+,b)$, where we have 
$$M\tensor_bN= M\tensor_b F(G/K_+, b)\simeq F(G/K_+,M)$$
since $G/K_+$ is small as as a $G$-spectrum. 

\begin{cor}
\label{cor:main}
If the group $G$ is $k$-connected then for any subgroup $K\subseteq
G$,  there is a strongly convergent spectral sequence 
$$\Tor^{\HBG}_{*,*}(b^*_G(X), \HBK)\Rightarrow b_K^*(X). $$
\end{cor}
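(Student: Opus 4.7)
The plan is to derive the corollary as a direct specialization of the Tor half of Theorem \ref{thm:EMmain}, applied to the $b$-modules $M = F(X_+, b)$ and $N = F(G/K_+, b)$. With these choices there is no further construction; all that remains is the bookkeeping identification of the three homotopy groups appearing in the spectral sequence of Theorem \ref{thm:EMmain}.

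First, I would identify the $E_2$-page. Since $b$ represents Borel cohomology, the defining adjunction gives $\piG_{-n} F(X_+, b) = [S^{-n} \wedge X_+, b]^G = b^n_G(X)$, so $\piG_* M$ recovers $b^*_G(X)$ up to the standard homotopy/cohomology sign on the grading. For the second factor, the induction/restriction adjunction $[{-} \wedge G/K_+, b]^G \cong [{-}, b]^K$ yields $\piG_* N = \pi^K_* b$, and since $EG$ is a free contractible $K$-space, hence a model for $EK$, this equals $H^{-*}(BK; k) = \HBK$.

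Next, I would identify the abutment. As noted in the paragraph immediately preceding the corollary, smallness of $G/K_+$ as a $G$-spectrum supplies a natural equivalence $M \tensor_b F(G/K_+, b) \simeq F(G/K_+, M)$ for any $b$-module $M$. Taking $M = F(X_+, b)$, the right-hand side becomes $F(G/K_+ \wedge X_+, b)$, and a further application of the adjunction shows that its equivariant homotopy groups are $[S^{-n} \wedge G/K_+ \wedge X_+, b]^G \cong [S^{-n} \wedge X_+, b]^K = b^n_K(X)$.

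Finally, since $G$ is $k$-connected by hypothesis, Theorem \ref{thm:EMmain} applies and the Tor spectral sequence converges strongly; substituting the identifications above produces the claimed spectral sequence $\Tor^{\HBG}_{*,*}(b^*_G(X), \HBK) \Rightarrow b^*_K(X)$. There is no substantive obstacle here, since the heavy lifting has already been done in Theorem \ref{thm:EMmain}; the only point meriting attention is tracking the grading conventions, as the parent spectral sequence is written in terms of $\piG_*$ while the corollary is stated cohomologically, but the two differ only by the standard sign flip on the grading.
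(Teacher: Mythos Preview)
Your proposal is correct and follows exactly the approach the paper takes: the paragraph immediately preceding the corollary already records the key identification $M\tensor_b F(G/K_+,b)\simeq F(G/K_+,M)$ via smallness of $G/K_+$, and the corollary is then just Theorem~\ref{thm:EMmain} with $M=F(X,b)$ and $N=F(G/K_+,b)$, together with the standard adjunction identifications you spell out. One tiny notational point: in the paper $X$ is already a based $G$-space (cf.\ the definition of $b_G^*$ and Remark~\ref{rem:kconnEM}(ii)), so you want $M=F(X,b)$ rather than $F(X_+,b)$.
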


\begin{remark}
\label{rem:kconnEM}
(i) The hypothesis that $G$ is $k$-connected is necessary to get a general 
statement.  For example, if $K=G_e$
is the identity component of $G$, and $k$ is a field of characteristic
0 then $b^*_G(X)=b^*_K(X)^{G_d}$ where $G_d=G/G_e$ is the discrete 
quotient. Working rationally, we may realise a non-zero simple
$G_d$-module $V$ as a Moore spectrum $MV$ and hence  get a zero
$E_2$-term, whilst $b_{G_e}^*(MV)\neq 0$.

(ii) The corollary takes the form of an Eilenberg-Moore spectral
sequence in the sense that it has the expected $E_2$-term and end
point. In fact the proof will show that the entire spectral sequence
coincides with the classical Eilenberg-Moore spectral sequence for the
pullback square 
$$\xymatrix{
EK\times_K Z \ar[r]\ar[d]&EG\times_G Z\ar[d]\\
BK \ar[r]&BG
}$$
where $Z$ is a $G$ space and  $X=Z_+$. 
\end{remark}

\section{Generating the category of $b$-modules}
\label{sec:bgen}
Just as every $G$-spectrum is built out of cells $G/K_+$ as $K$ varies
through closed subgroups, so any $b$-module is built out of the
extended $b$-modules $G/K_+\sm b$. 

We note that $[X, F(G/K_+, b)]_G^*=b_K^*(X)$ so that if $F(G/K_+,b)$
is built from $b$ then $b_G^*(X)=0$ implies $b_K^*(X)=0$ if $X$ is
finite. Such an implication does not hold in general, but from 
 the Eilenberg-Moore theorem 
recalled in Remark \ref{rem:kconnEM} (ii), we see 
$$C^*(EK\times_K Z)\simeq C^*(EG\times_G Z)\tensor_{\CBG}\CBK$$
provided $G$ is connected. This may make the following statement
plausible. 

\begin{thm}
\label{thm:genmain}
If $G$ is $k$-connected and $b$ represents Borel cohomology with
coefficients in $k$ then $b$ is a generator of the category of
$b$-modules. 
\end{thm}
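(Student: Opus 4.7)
The plan is to reduce generation to a simple vanishing criterion: I claim $b$ generates the category of $b$-modules if and only if every $b$-module $Y$ with $\piG_* Y = 0$ is contractible. Indeed, morphisms out of $b$ in the homotopy category of $b$-modules are given by $[b, \Sigma^* Y]_b = \piG_* Y$, so the localising subcategory $\loc(b)$ coincides with all of $b$-mod exactly when no nonzero $b$-module is orthogonal to $b$ under $[b,-]_b$. A $b$-module $Y$ is contractible when, viewed as a $G$-spectrum, it satisfies $\pi^K_* Y = 0$ for every closed $K \subseteq G$, so what needs to be established is the implication
\[
\piG_* Y = 0 \;\Longrightarrow\; \pi^K_* Y = 0 \text{ for every closed } K \subseteq G.
\]

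To prove this implication I would apply the K\"unneth spectral sequence of Theorem \ref{thm:EMmain} with $M = Y$ and $N = F(G/K_+, b)$. The smallness of $G/K_+$ as a $G$-spectrum (used in the discussion just before Corollary \ref{cor:main}) identifies $Y \tensor_b F(G/K_+, b)$ with $F(G/K_+, Y)$, so the abutment equals $\piG_* F(G/K_+, Y) = \pi^K_* Y$ and the sequence reads
\[
\Tor^{\HBG}_{*,*}(\piG_* Y, \HBK) \Rightarrow \pi^K_* Y.
\]
If $\piG_* Y = 0$, then the $E_2$-page is identically zero. Strong convergence, which is precisely the content of Theorem \ref{thm:EMmain} and the sole place where the $k$-connectedness of $G$ enters, then forces $\pi^K_* Y = 0$ for every closed $K$, as required.

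In this approach the main obstacle is packaged entirely inside Theorem \ref{thm:EMmain}: given its strong convergence, the generation statement is essentially a two-line consequence. The one auxiliary identification that must be checked is $Y \tensor_b F(G/K_+, b) \simeq F(G/K_+, Y)$, which the paper already grants us from smallness of $G/K_+$. The role of the $k$-connectedness hypothesis is inherited directly from the spectral sequence, so the obstruction described in Remark \ref{rem:kconnEM}(i) will also block generation without that hypothesis, explaining why one cannot hope to do better.
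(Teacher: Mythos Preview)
Your argument is correct and matches the paper's own deduction: Section~\ref{sec:Kuenneth} proves Theorems~\ref{thm:EMmain} and~\ref{thm:genmain} together, and the very sentence ``Applying the spectral sequence in the form of Corollary~\ref{cor:main} shows that if $M^G_*=0$ then $M^K_*=0$ for all subgroups $K\subseteq G$ and hence that $b$ generates all $b$-modules'' is precisely your argument. The only point worth flagging is that you lean on Theorem~\ref{thm:EMmain} as a black box, whereas in the paper its proof is not logically prior to Theorem~\ref{thm:genmain} but simultaneous; you should check (as is indeed the case) that the convergence proof in Section~\ref{sec:Kuenneth} uses only the standard fact that the extended modules $G/K_+\sm b$ generate, and never invokes Theorem~\ref{thm:genmain} itself, so no circularity arises.
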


\begin{remark}
Note that there is no finiteness requirement. The question of which
$b$-modules are {\em finitely} built by  $b$ is more subtle. In
\cite{Dsg} the notion of finite generation is introduced, which is
obviously a necessary condition for a module to be finitely built by
$b$. It is shown that for finite groups  $b$ 
finitely builds every finitely generated $b$-module precisely when
$G$ is $p$-nilpotent. 
\end{remark}

The proof of the theorem  is by comparison with classical
Eilenberg-Moore spectral sequence, and the agreement of the
spectral sequences may be of interest in itself. In Section
\ref{sec:torus} we outline an
alternative approach via  reduction to unitary groups and then the maximal torus.

\subsection{Consequence of $b$ generating}
As noted above there are K\"unneth and Universal Coefficient spectral
sequences for the (non-equivariant) fixed point ring spectrum $b^G$:  
$$\Ext_{H^*(BG)}^{*,*}(M^G_*, N^G_*) =\Ext_{b_G^*}^{*,*}(M^G_*, N^G_*)\Rightarrow 
\pi_*(\Hom_{b^G}(M^G, N^G)). $$
$$\Tor^{H^*(BG)}_{*,*}(M^G_*, N^G_*) =\Tor^{b_G^*}_{*,*}(M^G_*, N^G_*)\Rightarrow 
\pi_*(M^G\tensor_{b^G} N^G). $$

The point is that when $b$ is a generator of the category of
$b$-module $G$-spectra, these also calculate the homotopy of the
$G$-spectra $\Hom_b(M,N)$ and
$M\tensor_bN$ respectively, since in that case the following 
two elementary equivalences apply to all modules. 

\begin{lemma}
\label{lem:HomTensor}
(i) The map 
$$\Hom_b(M, N)^G\lra \Hom_{b^G}(M^G,N^G)$$
is an equivalence if $M$ is built by $b$. Accordingly, there is then a
spectral sequence
$$\Ext_{H^*(BG)}^{*,*}(M^G_*, N^G_*)\Rightarrow \pi^G_*(\Hom_b(M,N)).$$

(ii) The natural map 
$$ M^G\tensor_{b^G}N^G \lra (M\tensor_b N)^G $$
is an equivalence if $M$ is built by $b$. Accordingly, there is then a
spectral sequence
$$\Tor^{H^*(BG)}_{*,*}(M^G_*, N^G_*)\Rightarrow \pi^G_*(M\tensor_bN).\qqed$$
\end{lemma}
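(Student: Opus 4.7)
The plan is a standard cells argument. Both natural maps are tautological equivalences when $M=b$: in (i) each side collapses to $N^G$ via $\Hom_b(b,N)\simeq N$ and $\Hom_{b^G}(b^G,N^G)\simeq N^G$; in (ii) each side collapses to $N^G$ via $b\tensor_b N\simeq N$ and $b^G\tensor_{b^G}N^G\simeq N^G$. So it suffices to show that the full subcategory of $b$-modules on which the natural map is an equivalence is closed under the operations that build $M$ from $b$, and then invoke Theorem \ref{thm:genmain}.

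Next I would verify that both sides are exact functors of $M$, in the sense of taking cofiber sequences of $b$-modules to fiber sequences (for (i)) or cofiber sequences (for (ii)) of spectra. On the left this uses that $\Hom_b(-,N)$ and $-\tensor_b N$ are exact in $M$ and that the genuine fixed-point functor $(-)^G$ preserves both cofiber and fiber sequences. On the right one notes that $(-)^G$ carries $b$-module cofiber sequences to $b^G$-module cofiber sequences, while $\Hom_{b^G}(-,N^G)$ and $-\tensor_{b^G}N^G$ are exact in their first argument. Closure under suspensions and retracts is then automatic.

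Combining these, the full subcategory of $M$ on which the map is an equivalence is a thick subcategory containing $b$; to conclude for every $M$ built by $b$ one also needs closure under arbitrary coproducts. For (i) this is immediate, since $\Hom$ sends wedges in the first variable to products and $(-)^G$ preserves products. For (ii) it holds within the localizing subcategory generated by $b$, reducing via the cell filtration to the statement that $(-)^G$ commutes with wedges of copies of $b$ itself, which is visible from the explicit model $b=F(EG_+,Hk)$. Hence both maps are equivalences for every $M$ built by $b$, and by Theorem \ref{thm:genmain} this covers all $b$-modules. The spectral sequences are then obtained by applying the K\"unneth and Universal Coefficient spectral sequences for the commutative ring spectrum $b^G=\CBG$ to $M^G$ and $N^G$ and transporting along the two equivalences.

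The step I expect to require the most care is this last coproduct compatibility in (ii), since in general the genuine fixed-point functor does not preserve arbitrary wedges of $G$-spectra; the argument has to use either the explicit form of $b$ as a Borel spectrum or the very fact, from Theorem \ref{thm:genmain}, that $b$ generates its module category and hence every relevant coproduct arises up to cofiber sequences from wedges of $b$.
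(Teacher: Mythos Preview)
Your approach is the paper's: check the base case $M=b$ and then observe that the class of $M$ for which the comparison map is an equivalence is closed under coproducts, integer suspensions, and mapping cones. The paper's proof is literally one sentence to that effect.

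Your caution about coproducts in (ii), and the asymmetry you draw between (i) and (ii), are both misplaced. In the genuine equivariant stable category the sphere is compact, so $\pi_*^G$ commutes with arbitrary wedges; since $\pi_*\bigl((-)^G\bigr)=\pi_*^G(-)$, the comparison $\bigvee_\alpha X_\alpha^G \to \bigl(\bigvee_\alpha X_\alpha\bigr)^G$ is always a $\pi_*$-isomorphism and hence an equivalence. Thus $(-)^G$ preserves coproducts outright, with no need to invoke the explicit Borel model of $b$ or to bootstrap from Theorem~\ref{thm:genmain}. Note too that your argument for (i) tacitly needs the same fact on the target side: to pass from $\Hom_{b^G}\bigl((\bigvee M_\alpha)^G, N^G\bigr)$ to $\prod_\alpha \Hom_{b^G}(M_\alpha^G, N^G)$ you must first identify $(\bigvee M_\alpha)^G$ with $\bigvee M_\alpha^G$; the observation that $\Hom$ turns wedges into products and that $(-)^G$ preserves products handles only the source side.
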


\begin{proof}
In both cases, the map is obviously an equivalence when $M=b$. The
class of objects for which it is an equivalence is closed under
coproducts, integer suspensions and mapping cones. 
\end{proof}

\section{The K\"unneth spectral sequence}
\label{sec:Kuenneth}
In this section we prove Theorems \ref{thm:EMmain} and \ref{thm:genmain}. The main point of the
proof of the Eilenberg-Moore-type spectral sequence is that the usual
construction gives a {\em convergent} spectral
sequence. Applying the spectral sequence in the form of Corollary
\ref{cor:main} shows that if $M^G_*=0$ then $M^K_*=0$ for all
subgroups $K\subseteq G$ and hence that $b$ generates all
$b$-modules. 

\subsection{The construction}
First,  we construct a resolution of $\pi^G_*(M)$ by free
$\HBG=\pi^G_*(b)$-modules
$$0\lla M^G_*\lla P_0\lla P_1\lla P_2\lla \cdots . $$
We  then realize it by a diagram 
$$\xymatrix{
M=M_0\ar[r]  &M_1 
\ar[r]                    &M_2 \ar[r]                  &M_3 \ar[r] & \\
\PP_0\ar[u]    &\Sigma^1 \PP_1\ar[u] &\Sigma^2\PP_2\ar[u] &\Sigma^3 \PP_3\ar[u] &
}$$
where $\pi^G_*(\PP_s)\cong P_s$.  Defining $M^s$ by  the cofibre sequence 
$$M^s\lra M \lra M_s$$
we see that $M^s$ and $M^{\infty}=\hocolim_s M^s$ are built from $b$, and that there is a map
$\alpha_M: M^{\infty} \lra M$ which is a
$\pi^G_*$-isomorphism. Furthermore the map is unique over $M$. 
This induces a map 
$$\alpha_M\tensor_b N:M^{\infty}\tensor_b N \lra M\tensor_b N, $$
which we will prove is also a $\piG_*$-isomorphism. The point is that
the spectral sequence obtained by filtering $M^{\infty}$  then reads
$$\Tor^{\HBG}_{*,*}(\piG_*(M^{\infty}), \piG_*(N))\Rightarrow \pi^G_*(M^{\infty}\tensor_b 
N)=\pi^G_*(M\tensor_b N)$$
as required. 


It remains to show that $\alpha_M\tensor_b N: M^{\infty}\tensor_bN \lra
M\tensor_bN$ is a $\piG_*$-isomorphism. Define 
$$\cC_N:=\{ M \st \alpha_M \tensor_bN \mbox{ is a $\piG_*$ iso }\}$$
$$\mcD_M:=\{ N \st \alpha_M\tensor_bN\mbox{ is a $\piG_*$ iso }\}$$
We note that $\cC_N$ is a  localizing subcategory for any $N$ and 
 $\mcD_M$ is a  localizing subcategory for any $M$.

We will show in Subsection \ref{subsec:KEM} below that $\alpha_M\tensor N$ is a $\piG_*$-isomorphism for
$M=F(X,b)$ and $N=F(G/K_+,b)$. Since the spectra $F(G/K_+,b)$ as $K$
varies generate all $b$-modules, this shows that the category
$\mcD_{F(X,b)}$  is the whole category of $b$-modules.  It follows
that for each $N$, the category $\cC_N$ contains $F(X,b)$. Since the
modules $F(X,b)$ generate all $b$-modules, we infer $\alpha_M\tensor_bN$ is a
$\piG_*$ isomorphism for all $M, N$.

This completes the formal part of the proof, and leaves us with the
substance.
  It remains to show that $M^{\infty}\tensor_bN\lra M\tensor_bN$
is a $\piG_*$-isomorphism when $M=F(G/K_+, b), N=F(X,b)$. Indeed, 
this is the map $F(G/K_+, M^{\infty})\lra F(G/K_+,M)$ which in $G$
fixed points reads $\pi_*^K(M^{\infty})\lra \pi_*^KF(X,b)=H^*_K(X)$. We will observe that
we can choose the resolution so that the filtration on $(M^{\infty})^K$ is the
Eilenberg-Moore filtration, and then the isomorphism follows from 
the  convergence of the Eilenberg-Moore spectral sequence, which is the main 
result of \cite{DwyerEM}. 

\subsection{Comparison with the classical Eilenberg-Moore spectral sequence}
\label{subsec:KEM}
We observe that suitable constructions of the Eilenberg-Moore and
K\"unneth spectral sequences coincide. 

To start with, if we are given maps  $X\stackrel{f}\lra B \stackrel{g}
\lla Y$ of spaces, we may form the two-sided geometric cobar complex 
$$CB^n(X,B,Y)=X\times B^{\times n}\times Y$$
with coface maps
$$\begin{array}{rcl}d^0(x, b_1, \ldots , b_n, y)&=&(x, f(x), b_1, \ldots , b_n, y)\\
d^i(x, b_1, \ldots , b_n, e)&=&(x, b_1, \ldots, *,\ldots  , b_n, e)
                                \mbox{ for $1\leq i\leq n$ }\\ 
d^{n+1}(x, b_1, \ldots , b_n, e)&=&(x,  b_1, \ldots , b_n, g(y),
y). 
\end{array}$$
The $i$th codegeneracy map is given by projection away from the
$i+1$st factor of $B$.

The Eilenberg-Moore spectral sequence of the pullback square 
$$\xymatrix{
P \ar[r]\ar[d]&X\ar[d]^f\\
Y\ar[r]_g&B 
}$$
where $f$ or $g$ is a fibration, is obtained by taking cochains of 
$CB(X,B,Y)$, which is equivalent to  the bar construction
$B(C^*(X),C^*(B), C^*(Y))$ if $X,B$ and $Y$ are locally finite. 

More precisely, if $A$ is a strictly commutative ring spectrum, we
write $C^*(X;A)=F(X_+,A)$,  and note that we may form the simplicial
spectrum $B(C^*(X;A),C^*(B;A),C^*(Y; A))$ and there is a map 
$$B(C^*(X;A), C^*(B;A), C^*(Y;A)) \lra C^*(CB(X,B,Y), A) $$
which is a weak equivalence if $B,X, Y$ are finite complexes, or  if
they are locally finite and $A$ represents ordinary cohomology.

Now consider the special case 
$$\xymatrix{
EG\times_KZ \ar[r]\ar[d]&EG\times_G Z\ar[d]\\
BK\ar[r]&BG 
}$$

This is an instance of the above, so we can form the cosimplicial space
$CB(EG\times_GZ,BG,BK)$. We can also form the cosimplicial $G$-space
$CB(EG\times Z,BG,BK)$. It is then immediate that we have an
isomorphism 
$$F(CB(EG\times_GZ,BG,BK)_+, Hk)\simeq
F(CB(EG\times Z,BG,BK)_+, b)^G $$
of simplicial spaces. 

Finally, we observe that the cosimplicial filtration of $CB(EG\times_G
Z,BG,EG)$ gives a filtration of $F(CB(EG\times_G Z,BG,EG)_+, b) $ which
is a particular example of a filtration giving rise to the K\"unneth
spectral sequence. 

Now the resolution is $B(C^*(X_{hG}), \CBG, b)$, so the spectral
sequence is obtained by applying tensoring with $N=F(G/K_+,b)$  
$$ B(C^*(X_{hG}), \CBG, b)\tensor_{b}F(G/K_+,b)\simeq 
B(C^*(X_{hG})), \CBG, F(G/K_+,b)). $$
The spectral sequence is obtained by passing to fixed points 
to obtain $B(C^*(X_{hG}), \CBG, \CBK)$ and then taking homotopy. This is
precisely the Eilenberg-Moore spectral sequence as required.

\section{Tales of the torus}
\label{sec:torus}
In this section we describe a method based on the properties of the
torus. It is standard practice in transformation groups to have
a statement $P(G)$ for each compact Lie group $G$. We then attempt a 
proof of $P(G)$ in general by the strategy
\begin{itemize}
\item $P(G)$ is true with  $G=T(n)$ an $n$-torus
\item $P(G)$ for $G=U(n)$ follows from $P(T(n))$ where $T(n)$ is the
  maximal torus of $U(n)$. 
\item If $G$ is a subgroup of $U(n)$ then $P(G)$ follows from $P(U(n))$.
\end{itemize}

In our case we are interested in a statement that is not true for all
groups, so the reduction cannot work in generality. However the first
and second steps work, and we can investigate the third. 

\begin{thm}
\label{thm:Tngen}
If $G$ is a torus, then for any coefficient ring $k$, the category of 
$b$-modules is generated by $b$. 
\end{thm}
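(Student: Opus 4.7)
The plan is to prove by induction on the rank $n$ of $G = T^n$ that every extended $b$-module $(G/K)_+ \sm b$, for $K \leq G$ a closed subgroup, lies in $\Loc(b)$. Since every $b$-module is built from these extended modules, this implies that $b$ is a generator of the category of $b$-modules.

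For the base case $G = S^1$, the proper closed subgroups are the cyclic groups $C_m$ (with $C_1 = \{1\}$). Writing $V_m$ for the weight-$m$ one-dimensional complex representation of $S^1$, one has $S^1/C_m \cong S(V_m)$ as $S^1$-spaces, so the cofibre sequence $S(V_m)_+ \to S^0 \to S^{V_m}$ smashed with $b$ becomes a cofibre sequence of $b$-modules
\[
 (S^1/C_m)_+ \sm b \longrightarrow b \longrightarrow \Sigma^{V_m} b.
\]
Because $V_m$ is a complex representation, the Thom class of the associated bundle $ES^1 \times_{S^1} V_m$ over $BS^1$ exhibits $\Sigma^{V_m} b \simeq \Sigma^2 b$ as $b$-modules for any coefficient ring $k$. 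Hence both $b$ and $\Sigma^{V_m} b$ lie in $\Loc(b)$, and therefore so does the fibre $(S^1/C_m)_+ \sm b$.

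For the inductive step, write $T^n = T^{n-1} \times S^1$ and let $K \leq T^n$ be a closed subgroup. With $K_2 = \pi_2(K) \leq S^1$ and $K' = K \cap T^{n-1} \leq T^{n-1}$, the short exact sequence $K' \hookrightarrow K \twoheadrightarrow K_2$ induces a $T^n$-equivariant fibre sequence
\[
 T^{n-1}/K' \longrightarrow T^n/K \longrightarrow S^1/K_2,
\]
where the total space carries the left-multiplication action. The inductive hypothesis applied to $T^{n-1}$ and to $S^1$ provides that the corresponding extended modules for $K' \leq T^{n-1}$ and $K_2 \leq S^1$ lie in the respective localizing subcategories; one then promotes these to $T^n$-equivariant statements using the cellular filtration of the base $(S^1/K_2)_+$ together with the cofibre structure of the fibration, assembling $(T^n/K)_+ \sm b$ as an iterated cofibre of copies of $b$.

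The main obstacle I expect is the transfer of the inductive hypothesis from $T^{n-1}$- and $S^1$-equivariant statements to $T^n$-equivariant statements, particularly when the extension $K' \hookrightarrow K \twoheadrightarrow K_2$ does not split, so $K$ is twisted rather than a product. In the non-split case the fibration above is a non-trivial $T^n$-bundle, and one must handle the twisting. The cleanest approach is first to pass to a product cover $K' \times K_2$ of $K$ via an isogeny (which always exists for compact abelian Lie groups), carry out the construction there where the factors decouple, and then descend via the finite-group spectral sequence associated to the cover. Because tori are $k$-connected for every coefficient ring $k$, all relevant spectral sequences converge unconditionally.
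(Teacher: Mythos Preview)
Your base case is correct and contains the essential idea: the cofibre sequence $S(\alpha)_+ \to S^0 \to S^\alpha$ together with the Thom isomorphism $\Sigma^\alpha b \simeq \Sigma^2 b$ (valid for any complex character $\alpha$, over any $k$) shows that $S(\alpha)_+ \sm b$ lies in $\Loc(b)$.

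The inductive step, however, has a genuine gap. Your inductive hypothesis concerns $T^{n-1}$-equivariant $b_{T^{n-1}}$-modules, while what you need is a statement about $T^n$-equivariant $b_{T^n}$-modules; these live in different categories and there is no automatic ``promotion''. The fibration $T^{n-1}/K' \to T^n/K \to S^1/K_2$ does not help in the way you suggest: as a $T^n$-space the base $S^1/K_2$ is a \emph{single orbit} $T^n/(T^{n-1}\times K_2)$, so it has no nontrivial $T^n$-cellular filtration to feed into an iterated-cofibre argument. Your isogeny-and-descent fix is also problematic: any descent spectral sequence along a finite cover is governed by a finite group, which is typically not $k$-connected, so the appeal to $k$-connectedness of tori does not justify its convergence.

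The paper sidesteps induction on the rank entirely. For $G=T^n$ and a closed subgroup $H$ of codimension $c$, Pontryagin duality supplies characters $\alpha_1,\ldots,\alpha_c$ of $G$ with $H=\bigcap_i \ker(\alpha_i)$, and hence a $G$-equivariant product decomposition
\[
G/H \;\cong\; S(\alpha_1)\times\cdots\times S(\alpha_c).
\]
One then iterates your base-case argument $c$ times, always working $G$-equivariantly: from the cofibre sequence and Thom isomorphism, $\pi^G_*(M)=0$ implies $\pi^G_*(S(\alpha_i)_+\sm M)=0$; applying this successively to $M$, then $S(\alpha_1)_+\sm M$, then $S(\alpha_1)_+\sm S(\alpha_2)_+\sm M$, and so on, yields $\pi^G_*(G/H_+\sm M)=\pi^H_*(M)=0$. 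No change of ambient group is needed, and the twisting you worried about never arises.
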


\begin{proof} Suppose  $G$ is a torus. 
We simply need to show if $M$ is a $b$-module with $\pi^G_*M=0$ then 
in fact 
$\pi^H_*M=0$ for all $H$ (and hence $M\simeq 0$).

For any subgroup $H$ of codimension $c$ we may find one dimensional
representations $\alpha_1, \ldots, \alpha_c$ so that
$H=\ker(\alpha_1)\cap \ldots \cap \ker(\alpha_c)$, and then 
$$G/H=S(\alpha_1)\times \cdots \times S(\alpha_c). $$
Now observe that $S(\alpha)=G/\ker (\alpha)$, and we have a  cofibre sequence
$$S(\alpha)_+\lra S^0\lra S^{\alpha}. $$
Smashing with a $b$-module $M$ this gives
$$S(\alpha)_+\sm M \lra S^0\sm M \lra S^{\alpha}\sm M\simeq S^2\sm M, 
$$
where the last equivalence is a Thom isomorphism. 
This shows that if $\pi^G_*(M)=0$ then $\pi^G_*(G/\ker (\alpha)_+ \sm
M)=0$

Repeating, we see $\pi^H_*(M)=0$. 
\end{proof}

\begin{thm}
\label{thm:Ungen}
If $G=U(n)$, then for any coefficient ring $k$, the category of 
$b$-modules is generated by $b$. 
\end{thm}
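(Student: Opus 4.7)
The plan is to reduce the case $U=U(n)$ to the maximal torus $T=T(n)$ and then apply Theorem \ref{thm:Tngen}. The key structural input is Borel's theorem: $H^{*}(BT;\Z)$ is a free $H^{*}(BU;\Z)$-module of rank $n!=|W|$ on the Schubert monomials $\{t_{1}^{a_{1}}\cdots t_{n}^{a_{n}} : 0\le a_{i}<i\}$, a property preserved under base change to any coefficient ring $k$.

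The technical heart of the proof is to promote Borel's theorem to an equivalence of $b_{U}$-modules
$$F(U/T_{+}, b_{U}) \simeq \bigvee_{w\in W}\Sigma^{2\ell(w)}b_{U},$$
where $\ell(w)$ is the length of the Schubert cell. I realize the Schubert classes as maps $\Sigma^{2\ell(w)}b_{U}\to F(U/T_{+},b_{U})$ (elements of $\pi^{U}_{2\ell(w)}F(U/T_{+},b_{U})=H^{2\ell(w)}(BT;k)$) and wedge them to produce a map $\phi$; this is a $\pi^{U}_{*}$-isomorphism by Borel's theorem. To upgrade $\phi$ to an equivalence of $b_{U}$-modules, I verify it induces a $\pi^{K}_{*}$-isomorphism for every closed $K\le U$ by examining the Serre spectral sequence of the Borel fibration $U/T\to EU\times_{K}(U/T)\to BK$. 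The spectral sequence collapses at $E_{2}$ because $H^{*}(U/T;k)$ is free over $k$ and concentrated in even degrees, so $H^{*}_{K}(U/T;k) \cong H^{*}(BK;k)\otimes_{k} H^{*}(U/T;k)$, matching the $\pi^{K}_{*}$ of the wedge side.

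Given the splitting, since $U/T_{+}$ is a finite $U$-CW spectrum, for any $b_{U}$-module $M$ we have
$$F(U/T_{+}, M)\simeq F(U/T_{+},b_{U})\otimes_{b_{U}}M\simeq \bigvee_{w}\Sigma^{2\ell(w)}M,$$
hence $\pi^{T}_{*}(M) \cong \bigoplus_{w\in W}\Sigma^{2\ell(w)}\pi^{U}_{*}(M)$. In particular, $\pi^{U}_{*}(M)=0$ implies $\pi^{T}_{*}(M)=0$. Since $\res^{U}_{T}b_{U}\simeq b_{T}$ (both being $F(EU_{+},Hk)$ restricted along a free contractible $T$-space), the restriction $M|_{T}$ is a $b_{T}$-module with vanishing $\pi^{T}_{*}$, and Theorem \ref{thm:Tngen} gives $\pi^{S}_{*}(M)=0$ for every closed subgroup $S\le T$.

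I expect the main obstacle to be the final extension: concluding $\pi^{H}_{*}(M)=0$ for an arbitrary closed subgroup $H\le U$, not merely for $H$ subconjugate to $T$. Every such $H$ has a maximal torus $T_{H}$ that conjugates into $T$, so $\pi^{T_{H}}_{*}(M)=0$. For connected $H$ I would iterate the Schubert-splitting strategy for the pair $(H,T_{H})$, using the fibration $H/T_{H}\to U/T_{H}\to U/H$ to factor $F(U/T_{H},b_{U})$ as a wedge over $W_{H}$ of shifts of $F(U/H,b_{U})$, which reduces the vanishing of $\pi^{H}_{*}(M)$ to that of $\pi^{T_{H}}_{*}(M)$ whenever $H^{*}(H/T_{H};k)$ is free over $k$ in even degrees. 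Disconnected $H$ reduce to their identity component by a further extension step. The delicacy lies in cases where this freeness fails (typified by $H=SO(3)$ with $k=\F_{2}$), which may require supplementary descent arguments or an appeal to the main Theorem \ref{thm:genmain}; this is the most delicate piece of the alternative approach.
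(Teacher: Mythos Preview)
Your reduction from $\pi^{U}_{*}(M)=0$ to $\pi^{T}_{*}(M)=0$ via the wedge splitting of $F(U/T_{+},b_{U})$ is essentially the paper's route through Lemma~\ref{lem:UTEM} and the retract lemma (the freeness of $H^{*}(BT)$ over $H^{*}(BU)$ is the common input in both), and your invocation of Theorem~\ref{thm:Tngen} to obtain $\pi^{S}_{*}(M)=0$ for all $S\subseteq T$ matches the paper exactly.

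The gap is precisely where you locate it: the passage to an arbitrary closed $H\leq U$. Your proposed iteration of the Schubert argument for each connected $H$ cannot be made uniform in $k$ (you correctly flag $H=SO(3)$, $k=\F_{2}$), and an appeal to Theorem~\ref{thm:genmain} defeats the purpose of this section, which is to give an independent argument for $U(n)$. The paper bypasses the subgroup-by-subgroup analysis entirely by exploiting the $b$-module structure. Let $e$ be the idempotent of the Burnside ring $A(U)$ supported on subgroups subconjugate to the maximal torus. Since $b=F(EU_{+},Hk)$ is concentrated over the trivial subgroup, one has $eb\simeq b$; and since $\pi^{K}_{*}(M)=0$ for all $K\subseteq T$, one has $eM\simeq 0$. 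Then
\[
M \;\simeq\; b\tensor_{b}M \;\simeq\; eb\tensor_{b}M \;\simeq\; b\tensor_{b}eM \;\simeq\; 0,
\]
so the vanishing propagates to all of $M$ at once, without ever touching $\pi^{H}_{*}$ for non-toral $H$ directly. This idempotent trick is the missing idea in your proposal.
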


The following observation will be useful. 

\begin{lemma}
If  $G\supseteq H\supseteq K$ and $b_H^*$ is a retract of $b_K^*$ then
$M^H_*$ is a retract of $M^K_*$ for all $b$-modules $M$.  
\end{lemma}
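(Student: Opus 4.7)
The plan is to lift the retraction from the cohomological level to one between representing $b$-modules, tensor over $b$ with $M$, and pass to $G$-fixed-point homotopy. For any closed subgroup $L\subseteq G$, the functor $X\mapsto b_L^*(X)$ on $G$-spectra is corepresented by the $b$-module $F(G/L_+,b)$, since $b_L^*(X)=[X,F(G/L_+,b)]^*_G$. Reading the hypothesis as a natural retraction of these representables (compatible with the evident $b_G^*$-module structure), Yoneda delivers $b$-module maps
\[
\iota\colon F(G/H_+,b)\lra F(G/K_+,b),\qquad \rho\colon F(G/K_+,b)\lra F(G/H_+,b)
\]
with $\rho\iota\simeq \mathrm{id}$ in the derived category of $b$-modules. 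In the intended applications the retraction arises from a transfer along a finite covering, so both maps are manifestly $b$-linear and this promotion is automatic.

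Next I would express $M^L_*$ in a form compatible with smashing over $b$. Since $G/L_+$ is a finite $G$-CW complex, hence dualizable, one has $F(G/L_+,b)\sm_b M\simeq D(G/L_+)\sm b\sm_b M\simeq D(G/L_+)\sm M\simeq F(G/L_+,M)$, giving
\[
M^L_*=\pi^L_*(M)\iso \pi^G_*\bigl(F(G/L_+,M)\bigr)\iso \pi^G_*\bigl(F(G/L_+,b)\sm_b M\bigr).
\]

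To conclude, I would apply $-\sm_b M$ to the retract diagram and then $\pi^G_*$; functoriality yields the desired retract $M^H_*\to M^K_*\to M^H_*$ of graded abelian groups. The only delicate step is the initial lift---promoting a natural retraction of cohomology functors to a retraction in the category of $b$-modules---but since the relevant splitting will be realised by $b$-linear maps, this presents no genuine obstacle.
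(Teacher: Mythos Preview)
Your argument is correct and follows the same route as the paper: realise the retraction at the level of the representing $b$-modules $F(G/H_+,b)$ and $F(G/K_+,b)$, tensor with $M$ over $b$, use dualizability of $G/L_+$ to identify $F(G/L_+,b)\tensor_b M\simeq F(G/L_+,M)$, and take $\pi^G_*$. The only cosmetic difference is in how the hypothesis is read: the paper interprets ``$b_H^*$ is a retract of $b_K^*$'' directly as a retraction of the $H$-spectrum $b$ inside $F(H/K_+,b)$ (so that coinduction immediately gives the retraction of $b$-modules in $G$-spectra), whereas you start from a natural retraction of cohomology theories and invoke Yoneda, then argue that the resulting maps are $b$-linear in the cases of interest. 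Both readings arrive at the same place for the same reason.
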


\begin{proof}
The hypothesis is that the $H$-spectrum $b$ is a retract of $F(H/K_+, b)$,
and hence $F(G/H_+, b)$ is a retract of $F(G/K_+,b)$. Tensoring with
$M$ and using the equivalence $F(G/H_+, b)\tensor_bM\simeq F(G/H_+,M)$
we obtain the desired statement. 
\end{proof}

It is well known that $b_{T(n)}^*=k[x_1, \ldots, x_n]$ is free over  
$$b_{U(n)}^*=k[c_1, \ldots , c_n]=k[x_1, \ldots, x_n]^{\Sigma_n}. $$
The K\"unneth Theorem (a special case of Theorem \ref{thm:EMmain})
therefore takes an elementary form, which admits a simple  proof. 

\begin{lemma}
\label{lem:UTEM}
There is a natural isomorphism
$$b_{T(n)}^*(X)=b_{U(n)}^*(X)\tensor_{b_{U(n)}^*}b_{T(n)}^*$$
\end{lemma}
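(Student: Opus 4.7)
The plan is to use Leray--Hirsch for the fiber bundle of Borel constructions. For any $U(n)$-space $X$, the inclusion $T(n) \subset U(n)$ yields a fiber bundle
$$U(n)/T(n) \longrightarrow ET(n)\times_{T(n)}X \longrightarrow EU(n)\times_{U(n)}X,$$
obtained from the standard $U(n)/T(n)$-bundle $BT(n)\to BU(n)$ by pulling back along $(EU(n)\times_{U(n)}X) \to BU(n)$.

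First I would pick a $b_{U(n)}^*$-module basis $e_1, \ldots, e_{n!}$ of $b_{T(n)}^* = k[x_1, \ldots, x_n]$, for instance the Schubert monomials $x_1^{a_1}\cdots x_n^{a_n}$ with $0 \le a_i \le n-i$, and regard them as classes in $H^*(BT(n);k)$. The decisive geometric input is Borel's identification
$$H^*(U(n)/T(n);k) \cong k[x_1, \ldots, x_n]/(c_1, \ldots, c_n),$$
so the restrictions of the $e_i$ under $H^*(BT(n);k) \to H^*(U(n)/T(n);k)$ form a $k$-basis of the fiber cohomology. Since $c_1,\ldots,c_n$ is a regular sequence in $k[x_1,\ldots,x_n]$ over any commutative ring $k$, the Koszul complex shows this quotient is a free $k$-module of rank $n!$, so the argument is uniform in the coefficients.

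Next I would pull the $e_i$ back along $ET(n)\times_{T(n)}X \to BT(n)$ to classes in $b_{T(n)}^*(X)$ whose fiber restrictions give the chosen basis of $H^*(U(n)/T(n);k)$. Leray--Hirsch (equivalently, collapse of the Serre spectral sequence) then shows that $b_{T(n)}^*(X)$ is free over $b_{U(n)}^*(X)$ with basis $\{e_i\}$. Specialising to the case $X = \mathrm{pt}$ identifies $b_{T(n)}^*$ with the same free $b_{U(n)}^*$-module on $\{e_i\}$, and substituting this decomposition recasts the Leray--Hirsch isomorphism as
$$b_{T(n)}^*(X) \cong b_{U(n)}^*(X) \otimes_{b_{U(n)}^*} b_{T(n)}^*,$$
where the map is the natural one induced by multiplication. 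Naturality in $X$ is automatic because the $e_i$ are pulled back from the universal case.

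The main obstacle is verifying the hypotheses of Leray--Hirsch with coefficients in an arbitrary commutative ring $k$. This comes down to the freeness of $H^*(U(n)/T(n);k)$ as a $k$-module of rank $n!$, which is governed by the regularity of the elementary symmetric polynomials $c_1,\ldots,c_n$ in $k[x_1,\ldots,x_n]$. Once this input is in place, the rest of the argument is routine cohomological bookkeeping.
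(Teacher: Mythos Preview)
Your Leray--Hirsch argument is correct and gives the isomorphism for $U(n)$-spaces $X$; it is essentially the collapsed case of the Eilenberg--Moore spectral sequence that the paper invokes, so the two approaches coincide in substance. Your version is arguably more self-contained, since it only needs the freeness of $H^*(U(n)/T(n);k)$ and not a general convergence theorem.

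There is one point where your write-up falls short of what the paper actually proves and later uses. The paper treats both sides as equivariant cohomology theories on $U(n)$-\emph{spectra}, notes that the natural transformation is an isomorphism on spaces by Eilenberg--Moore, and then observes that since the cells $G/K_+$ are spaces, the transformation is an isomorphism in general. This matters because the lemma is applied to an arbitrary $b$-module $M$ (to deduce $M^{T(n)}_*=0$ from $M^{U(n)}_*=0$), not merely to $F(X,b)$ for a space $X$. Your argument establishes the isomorphism only for spaces. The passage to spectra is formal --- a natural transformation of represented cohomology theories that is an isomorphism on all cells is an isomorphism --- but you should say it, or equivalently note that your computation shows $F(U(n)/T(n)_+,b)\simeq \bigvee_i \Sigma^{|e_i|}b$ as $b$-modules, from which the module statement follows by tensoring with $M$.
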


\begin{proof}
There is a natural transformation of equivariant cohomology theories
from the right to the left. 
The classical Eilenberg-Moore Theorem states that it is an isomorphism
for spaces $X$. This includes the cells $G/K_+$, so the natural
transformation is an isomorphism in general. 
\end{proof}

\begin{remark}
It would be nice to have a proof based on a general calculation that
the $U(n)$-space  $U(n)/K \times U(n)/T(n)$ has a nice cell structure for all $K$.
\end{remark}

We are finally ready to prove Theorem \ref{thm:Ungen}.

\begin{proof}
If $M$ is a $U(n)$-equivariant $b$-module with $M^{U(n)}_*=0$ then by
Lemma \ref{lem:UTEM} we have $M^{T(n)}_*=0$. By Theorem \ref{thm:Tngen}
it follows that $M^K_*=0$ for all $K\subseteq T(n)$. 

Now if $e$ is the idempotent of the Burnside ring supported on
subgroups of the maximal torus we have $eM\simeq 0$ and $eb\simeq b$
so  
$$M\simeq b\tensor_b M\simeq eb\tensor_b M\simeq b\tensor_b
eM\simeq 0$$
as required. 
\end{proof}

\begin{remark}
We now consider the deduction of the statement for a general group
$G$, which we know is true if $G$ is $k$-connected. First, we may 
embed $G$ in $U(n)$ for some $n$. 

Now suppose $\pi^G_*(M)=0$ and note that 
$\pi^{U(n)}_*(F_G(U(n)_+, M))=\pi^G_*(M)=0$. From the case of $U(n)$ we infer that 
$F_G(U(n)_+, M )\simeq *$ (first $U(n)$-equivariantly and then
$G$-equivariantly by restriction). 

There is always a $G$-map $F_G(U(n)_+, M)\lra F_G(G_+, M)$, but it
need not be split. We can attempt to find a splitting by considering a 
$G$-cell structure of $U(n)$. However if $M=b$ we see that a first
obstruction is the surjectivity of the restriction map $H^*(BU(n))\lra
H^*(BG)$, and of course it often happens that $H^*(BG)$ is not 
generated by Chern classes. 
\end{remark}

\section{The relative Gorenstein condition}
\label{sec:relgor}

We are ready to give the proof of the relative Gorenstein property. To
obtain a natural statement we note that $L(G,H)=LG/LH$ is a representation
of $N_G(H)$, typically non-trivial on $H$. 

\begin{thm}
\label{thm:relgor}
For any $k$-connected compact Lie group $G$ and any closed subgroup
$H$ we have an equivalence of $\CBG$-modules with an action of $\pi_0(W_G(H))$:
$$\Hom_{C^*(BG)}(C^*(BH), C^*(BG))\simeq C^*(BH^{-L(G,H)}).   $$
The equivalence holds for arbitrary $G$ if $LG$ is $k$-orientable and $LH$ is
$k$-orientable. Without the $k$-connectedness or orientability
hypotheses,  we have the equivalence
$$\Hom_{\CBG}(C^*(BH^{LG}), C^*(BG^{LG})) \simeq C^*(BH^{-L(G,H)}).$$
\end{thm}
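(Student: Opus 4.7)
The plan is to prove the $k$-connected case directly using the generation theorem, and then deduce the general twisted statement by embedding $G$ into a connected compact Lie group. For $G$ $k$-connected, Theorem \ref{thm:genmain} says that $b$ generates the category of $b$-modules, so Lemma \ref{lem:HomTensor}(i) applied to $M = F(G/H_+,b)$ and $N=b$ gives
$$\Hom_{\CBG}(\CBH,\CBG) \simeq \Hom_b\bigl(F(G/H_+,b),\, b\bigr)^G.$$
Since $G/H_+$ is dualizable in $G$-spectra, $F(G/H_+, b) \simeq D(G/H_+)\wedge b$, and by reflexivity of dualizable objects one obtains
$$\Hom_b\bigl(D(G/H_+)\wedge b,\, b\bigr) \simeq F(D(G/H_+), b) \simeq G/H_+ \wedge b \simeq G_+\wedge_H (b|_H).$$
The Wirthm\"uller isomorphism identifies the $G$-fixed points of the right-hand side with $(\Sigma^{-L(G,H)} b|_H)^H$. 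Since $EG$ restricted along $H\subseteq G$ is a model for $EH$, $b|_H \simeq F(EH_+, Hk)$, and its homotopy $H$-fixed points twisted by $-L(G,H)$ are by definition $C^*(BH^{-L(G,H)})$. The right action of $N_G(H)$ on $G/H$ descends through $H$ to the stated $\pi_0(W_G(H))$-action on $\Hom$.

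For arbitrary $G$, choose an embedding $G\subseteq U$ into a connected compact Lie group (for instance $U = U(n)$), which is automatically $k$-connected. Applying the case just settled to the pairs $(U,G)$ and $(U,H)$ yields
$$\Hom_{\CBU}(\CBG,\CBU) \simeq C^*(BG^{-L(U,G)}), \quad \Hom_{\CBU}(\CBH,\CBU) \simeq C^*(BH^{-L(U,H)}).$$
The change-of-rings adjunction along $\CBU\to\CBG$ combined with the first of these equivalences gives
$$\Hom_{\CBG}\bigl(\CBH,\, C^*(BG^{-L(U,G)})\bigr) \simeq \Hom_{\CBU}(\CBH,\CBU) \simeq C^*(BH^{-L(U,H)}).$$
Writing $-L(U,G) = LG - LU$ and $-L(U,H) = LH - LU$ as virtual representations, using that the Thom-spectrum cochains $C^*(BG^V)$ are invertible $\CBG$-modules with multiplicative structure $C^*(BG^{V+W})\simeq C^*(BG^V)\tensor_{\CBG}C^*(BG^W)$, and using the base-change identity $\CBH\tensor_{\CBG} C^*(BG^V) \simeq C^*(BH^{V|_H})$, one tensors both sides of the displayed equivalence with the invertible $C^*(BG^{LU})$. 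This cancels the auxiliary $LU$-twist on each side and rearranges to the desired twisted equivalence. Orientability of $LG$ and $LH$ trivialises the residual Thom twists and collapses the statement to the untwisted form, handling the intermediate orientable case.

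The main obstacle is the Wirthm\"uller identification $(G_+\wedge_H b|_H)^G \simeq (\Sigma^{-L(G,H)} b|_H)^H$ in the first step: the sign of the tangential twist must be correct, the identification of $H$-fixed points of the twisted Borel spectrum with Thom-spectrum cochains must be made carefully, and the $\pi_0(W_G(H))$-action must be tracked consistently from the right $N_G(H)$-action on $G/H$. In the reduction to a connected overgroup the formal manipulations with invertible Thom modules and the base-change identity are routine, but one must carefully distinguish between $G$-restrictions and $H$-restrictions of the auxiliary representation $LU$ in order to make the cancellation clean and to confirm that only the intrinsic twist by $LG$ survives.
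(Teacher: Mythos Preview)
Your treatment of the $k$-connected case and the reduction via an embedding $G\subseteq U$ into a connected group are essentially the paper's own argument: both use Theorem \ref{thm:genmain} together with Lemma \ref{lem:HomTensor}(i) to identify $\Hom_{\CBG}(\CBH,\CBG)$ with $\Hom_b(F(G/H_+,b),b)^G$, then dualizability of $G/H_+$ and the Wirthm\"uller-type identification of $(G/H_+\sm b)^G$, and both reach the intermediate equivalence $\Hom_{\CBG}(\CBH, C^*(BG^{-L(U,G)}))\simeq C^*(BH^{-L(U,H)})$ by change of rings along $\CBU\to\CBG$.

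The gap is in your final step for the general twisted statement. You assert that $C^*(BG^V)$ is an invertible $\CBG$-module, that $C^*(BG^{V+W})\simeq C^*(BG^V)\tensor_{\CBG} C^*(BG^W)$, and that $\CBH\tensor_{\CBG} C^*(BG^V)\simeq C^*(BH^{V|_H})$. All three are Eilenberg--Moore type facts which hold when $b$ generates its module category (Lemma \ref{lem:HomTensor}), i.e.\ when $G$ is $k$-connected, and this is precisely the hypothesis you have dropped. They fail in general: with $k=\Q$, $G=\Z/2$, and $V$ the sign representation $\sigma$, one has $C^*(B\Z/2^{\sigma};\Q)=0$, which is certainly not invertible over $\CBG=\Q$, and the base-change identity with $H=1$ already fails since the right-hand side is $C^*(S^1;\Q)\simeq\Sigma^{-1}\Q$. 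Such non-orientable summands do occur in $LU|_G$ for suitable embeddings. The paper circumvents this by staying in the $U$-equivariant world, where $U$ is connected and $b$ does generate: it works with $b_U$-modules and $b^U$-Homs and invokes Lemma \ref{lem:susp} for the group $U$, so that the auxiliary $LU$-twist (which is a genuine $U$-representation) is cancelled before one ever passes to $\CBG$-modules. Your argument can be repaired along these lines, but the ``tensor by an invertible $\CBG$-module'' step as written does not go through.
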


\begin{remark}
The first equivalence does not hold in general. This is familiar from the 
case $H=1$ where \cite{Lieca} gives the example $G=O(2n)$. The first
case is very easy to check: 
$$\Hom_{C^*(BO(2))}(k, C^*(BO(2))\simeq \Sigma^3k. $$
\end{remark}

The failure of the first equivalence in the theorem is tied to the fact that not all
bundles are trivial, not all Thom spaces have cohomology free over
the base, and the fixed points of a suspension need not be a shift of
the original.

The positive result is as follows. 
 \begin{lemma}
\label{lem:susp}
If $G$ is $k$-connected then for any representation $W$ of $G$ we have
an equivalence
$$\Hom_{b^G}(M^G, N^G)\simeq \Hom_{b^G}((\Sigma^W M)^G, 
(\Sigma^WN)^G)$$
\end{lemma}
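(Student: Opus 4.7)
The plan is to exploit the invertibility of $\Sigma^W$ to get an equivariant equivalence of internal Hom's, then use the generation result (Theorem \ref{thm:genmain}) together with Lemma \ref{lem:HomTensor}(i) to pass to fixed points.

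First I would observe that smashing with $S^W$ (equivalently, with the invertible $b$-module $\Sigma^W b$) is an auto-equivalence of the category of $b$-module $G$-spectra, with inverse $\Sigma^{-W}$. Consequently the natural map
\[
\Hom_b(M, N) \lra \Hom_b(\Sigma^W M, \Sigma^W N)
\]
induced by $\Sigma^W$ is an equivalence of $G$-spectra, for any $b$-modules $M$ and $N$. Passing to $G$-fixed points then yields an equivalence
\[
\Hom_b(M,N)^G \simeq \Hom_b(\Sigma^W M, \Sigma^W N)^G.
\]

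Next I would invoke the $k$-connectedness hypothesis: by Theorem \ref{thm:genmain}, the ring spectrum $b$ generates the category of all $b$-modules, so both $M$ and $\Sigma^W M$ are built by $b$. Lemma \ref{lem:HomTensor}(i) then supplies equivalences
\[
\Hom_b(M,N)^G \simeq \Hom_{b^G}(M^G, N^G)
\quad\text{and}\quad
\Hom_b(\Sigma^W M, \Sigma^W N)^G \simeq \Hom_{b^G}((\Sigma^W M)^G, (\Sigma^W N)^G).
\]
Splicing the three equivalences together gives the desired statement.

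The only real subtlety is the appeal to generation: without $k$-connectedness there is no reason for $\Sigma^W M$ (or even $M$) to be built by $b$, so Lemma \ref{lem:HomTensor}(i) is not available and one cannot identify the fixed points of $\Hom_b$ with $\Hom_{b^G}$ of fixed points. This is exactly the point at which the hypothesis enters, and explains why the conclusion should be regarded as a consequence of Theorem \ref{thm:genmain} rather than a purely formal manipulation; indeed the fixed-point functor does not commute with $\Sigma^W$ in any simple way (e.g.\ $(\Sigma^W N)^G$ is not a shift of $N^G$ in general), so the content of the lemma is precisely that this asymmetry is invisible to $\Hom_{b^G}(-,-)$.
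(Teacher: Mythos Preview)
Your proof is correct and is essentially identical to the paper's own argument: both use that $\Sigma^W$ is an auto-equivalence of $b$-module $G$-spectra to obtain $\Hom_b(M,N)\simeq\Hom_b(\Sigma^W M,\Sigma^W N)$, and then invoke Theorem~\ref{thm:genmain} together with Lemma~\ref{lem:HomTensor}(i) to identify $G$-fixed points of $\Hom_b$ with $\Hom_{b^G}$ of fixed points on each side. The only difference is the order in which the two steps are presented.
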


\begin{remark}
To see this does not hold without some connectedness hypothesis, consider finite groups $G$ and
work over the rationals. 
\end{remark}
\begin{proof}
By Theorem \ref{thm:genmain} and 
Lemma  \ref{lem:HomTensor} (i)  we have
$$\Hom_{b^G}(M^G,N^G)\simeq \Hom_b(M,N)^G .$$
Now use the fact that suspension is an equivalence of the stable
homotopy category, and take fixed points again.  
\end{proof}

We may now turn to the proof of the theorem. 

\begin{proof}
Since $G/H_+$ is a finite spectrum, we have an equivalence
$$\Hom_b(F(G/H_+, b),b)\simeq G/H_+\sm b. $$
Taking fixed points we have
$$\Hom b(F(G/H_+, b),b)^G
\simeq (G/H_+\sm b)^G\simeq (\Sigma^{L(G,H)}
b)^H\simeq C^*(BH^{-L(G,H)}). $$

We may now complete the proof if $G$ is connected. In that case Theorem
\ref{thm:genmain} gives equivalences
\begin{multline*}
\Hom_b(F(G/H_+, b),b)^G \stackrel{\simeq }\lra  \Hom_{b^G}(F(G/H_+,
b)^G,b^G)\simeq \\  \Hom_{b^G}(b^H,b^G)\simeq \Hom_{\CBG}(\CBH,
\CBG),\end{multline*}
as in Lemma \ref{lem:HomTensor} (i). 

Finally, we note that this permits us to deduce the general case.

Choose a faithful representation of $G$ in $U=U(n)$ for some $n$. 
From the connected case we have
$$\Hom_{\CBU}(\CBG, \CBU)\simeq C^*(BG^{-L(U,G)})$$
and 
$$\Hom_{\CBU}(\CBH, \CBU)\simeq C^*(BH^{-L(U,H)}). $$

Now we have
$$\begin{array}{rcl}
\Hom_{\CBG}(\CBH, C^*(BG^{-L(U,G)})) &\simeq &\Hom_{\CBG}(\CBH, 
                                  \Hom_{\CBU}(\CBG, 
                                  \CBU))\\
&\simeq &\Hom_{\CBU}(\CBH, \CBU)\\
&\simeq &C^*(BH^{-L(U,H)}). 
\end{array}$$
as required. 

This completes the proof if $LG$ and $LH$ are $k$-orientable. 
Because $L(U,G)$ need not be trivial for $G$,  some additional
work is necessary to go further. To start with, 
note that $L(U,H)$ does admit an action of $N_U(H)$ and hence
$N_G(H)$.


We start with the equivalence of $U$-spectra
$$U_+\sm_H(\Sigma^{-L(U,G)}b)\simeq \Hom_b(F_H(U_+, \Sigma^{L(U,G)}b),
b), $$
and take $U$-fixed points.   Since $U$ is connected,  by
\ref{thm:genmain} and \ref{lem:HomTensor} this gives the first
equivalence of the following:
$$\begin{array}{rcl}
(\Sigma^{L(U,H)-L(U,G)}b)^H 
&\simeq &\Hom_{b^U}((\Sigma^{L(U,G)}b)^H,b^U) \\
      &\simeq &\Hom_{b^G}((\Sigma^{L(U,G)}b)^H,\Hom_{b^U}(b^G,b^U))\\
 &\simeq   &\Hom_{b^G}((\Sigma^{L(U,G)}b)^H,(\Sigma^{L(U,G)}b)^G) 
\end{array}$$
Since $L(U,G)=LU/LG$, $L(U,H)=LU/LH$, Lemma \ref{lem:susp} permits us
to deduce the second equivalence of the theorem.
\end{proof}


\end{document}